\def\seq#1#2#3{#1_{#2},\,\ldots,#1_{#3}}
\def\h{\widehat}
\def\vv{{\underline{v}}}
\def\tt{{\underline{t}}}
\def\ww{\underline{w}}
\def\1{\underline{1}}
\def\P{\mathbb P}
\def\Z{\mathbb Z}
\def\C{\mathbb C}
\def\CP{\mathbb C\mathbb P}
\def\OO{{\cal O}}
\newtheorem{theorem}{Theorem}
\newtheorem{statement}{Statement}
\newenvironment{definition}
{\smallskip\noindent{\bf Definition\/}:}{\smallskip\par}
\newenvironment{example}
{\smallskip\noindent{\bf Example\/}.}{\smallskip\par}
\newenvironment{examples}
{\smallskip\noindent{\bf Examples\/}.}{\medskip\par}
\newenvironment{remark}
{\smallskip\noindent{\bf Remark\/}.}{\smallskip\par}
\newenvironment{remarks}
{\smallskip\noindent{\bf Remarks\/}.}{\smallskip\par}
\newenvironment{proof}
{\noindent{\bf Proof\/}.}{{ $\Box$}\smallskip\par}
\title{Equivariant Poincar\'e series of filtrations and topology}
\author{
A.~Campillo
\and F.~Delgado \and S.M.~Gusein-Zade
\thanks{
Math. Subject Class. 14B05, 16W70,
16W22. Keywords: finite group actions,
filtrations, Poincar\'e series.
Partially supported by the grant MTM2007-64704 (with the help of
FEDER Program). Third author is also partially supported by the
Russian government grant 11.G34.31.0005, RFBR--10-01-00678,
NSh--4850.2012.1 and Simons-IUM fellowship.
} }
\date{}
\begin{document}
\def\eps{\varepsilon}

\maketitle

\begin{abstract}
Earlier, for an action of a finite group $G$ on a germ of an analytic variety, an equivariant $G$-Poincar\'e series of a multi-index
filtration in the ring of germs of functions on the variety was defined as an element of the Gro\-then\-dieck ring of $G$-sets with an additional structure.
We discuss to which extend the $G$-Poincar\'e series of a filtration defined by a set of curve
or divisorial valuations
on the ring of germs of analytic functions in two variables determines the (equivariant) topology of the curve or of the set of divisors.
\end{abstract}

\section*{Introduction}\label{sec0}
The Poincar\'e series of a multi-index filtration (say, on the ring
of germs of functions on a variety) was defined in \cite{CDK}. It
was computed for filtrations
on the ring $\OO_{\C^2,0}$ of germs of analytic functions in two variables
corresponding to plane curve
singularities with several branches \cite{duke} and for divisorial
ones \cite{divisorial}. In \cite{duke} it was found that the
Poincar\'e series of the filtration defined by a plane curve
singularity $(C,0)\subset(\C^2,0)$ coincides with the Alexander
polynomial in several variables of the corresponding link $C\cap
S^3_{\eps}\subset S^3_{\eps}$ ($S^3_{\eps}$ is the sphere of a small
radius $\eps$ centred at the origin in $\C^2$). Therefore it defines
the (embedded) topology of the plane  curve singularity
\cite{yamamoto}. Identifying all the variables in the Alexander
polynomial one gets the monodromy  zeta function of the singularity.
In \cite{FAOM} it was shown  that the Poincar\'e series of a
divisorial filtration in the ring $\OO_{\C^2,0}$ of germs of
functions in two variables also defines the topology of the
corresponding set of divisors (more precisely the topology of its
minimal resolution). The corresponding statement for the divisorial
filtration defined by {\em all} components of the exceptional
divisor of a resolution of a normal surface
singularity was obtained in \cite{cutkosky}.

The intention to generalize connections between Poincar\'e series of
filtrations and monodromy zeta functions to an equivariant context
led to the desire to define equivariant analogues of Poincar\'e
series and of monodromy zeta functions. In particular, in
\cite{RMC} there was defined an equivariant analogue of the
Poincar\'e series of a multi-index filtration on the ring of
functions on a complex analytic space singularity with an action of
a finite group $G$. This $G$-Poincar\'e series is an element of the
Grothendieck ring $K_0((G,r)\mbox{-sets})$ of $G$-sets with an
additional structure. (For the trivial group $G$ this ring coincides
with the ring $\Z[[t_1, \ldots, t_r]]$ of power series in several
variables.) It was computed for the filtrations defined by plane
curve singularities and for divisorial filtrations in the plane (see Section~\ref{defPoin} below).
Here we discuss to which extend the $G$-Poincar\'e series of these
filtrations determine the (equivariant) topology of the curve or of the set of
divisors.
We show that the $G$-Poincar\'e series of a collection of divisorial valuations determines the topology of the set of divisors. This is not, in general, the case for curve valuations. We describe some conditions on curves under which the corresponding statement holds. It remains unclear whether the (equivariant) topology of a collection of curves always determines the $G$-Poincar\'e series of the collection.

\section{$G$-equivariant resolutions}\label{sec1}


It is well known that two plane curve singularities are
topologically equivalent if and only if they have combinatorially
equivalent (embedded) resolutions: see e.g. \cite{Wall}. The formula for the Poincar\'e series of a plane curve
singularity in terms of a resolution from \cite{duke} implies, in particular, that
the Poincar\'e series of a plane curve singularity is a topological invariant. The notion of topological equivalence
of two sets of divisors in \cite{FAOM} was in fact formulated in
terms of topologically equivalent resolutions.

\begin{remark}
A divisorial valuation can be defined by a (generic) pair of curvettes corresponding to the divisor (see below).
This way a set of divisors can be defined by a curve (the union of the corresponding pairs of curvettes) and two
sets of divisors are topologically equivalent if and only if the corresponding curves are topologically equivalent.
\end{remark}

Here we discuss the concept of topologically equivalent resolutions in an
equivariant setting.

Let a finite group $G$ act on $(\C^2,0)$ (by complex analytic transformations).
Without loss of generality one can assume that this action is faithful and is
defined by a two-dimensional representation of the group $G$ (i.e. that $G$ acts on $\C^2$ by
linear transformations).

Let $(C_i,0)\subset(\C^2,0)$, $i=1, \ldots, r$, be (different) irreducible plane curve
singularities: branches.

\begin{remark}
Here we do not assume, in general, that the curve $\bigcup_{i=1}^r C_i$ is $G$-invariant (i.e.
that the set $\{C_i\}$ contains all $G$-shifts of its elements) or that all the branches $C_i$ belong to
different orbits of the $G$-action. Restrictions of this sort could be required for
particular statements.
\end{remark}

\begin{definition}
A {\em $G$-equivariant resolution} (or simply a {\em
$G$-resolution}) of the set $\{C_i\}$ (or of the curve $\bigcup_{i=1}^r C_i$) is a proper complex
analytic map $\pi: ({\cal{X, D}})\to (\C^2,0)$ from a
smooth surface ${\cal X}$ with an action of the group $G$ such that:
\begin{enumerate}
\item[1)]
$\pi$ is an isomorphism outside of the origin in $\C^2$;
\item[2)]
$\pi$ commutes with the $G$-actions on ${\cal X}$ and on $\C^2$;
\item[3)]
the total transform $\pi^{-1}(\bigcup_{i=1}^r C_i)$ of the curve $\bigcup_{i=1}^r C_i$ is a normal
crossing divisor on ${\cal X}$;
\item[4)]
for each branch $C_i$ its strict transform
$\widetilde{C}_i$ is a germ of a smooth curve transversal to the
exceptional divisor ${\cal{D}}=\pi^{-1}(0)$ at a smooth point $x$ of
it and is invariant with respect to the isotropy subgroup
$G_x=\{g\in G: gx=x\}$ of the point $x$.
\end{enumerate}
\end{definition}

\begin{remarks}
{\bf 1.} The resolution $\pi$ can be obtained by a sequence of
blow-ups of points (preimages of the origin). The exceptional
divisor $\cal{D}$ is the union of its irreducible components
$E_{\sigma}$, $\sigma\in\Gamma$. The set $\Gamma$ inherits the
partial order defined by a representation of $\pi$ as a sequence of
blow-ups: a component $E_{\sigma'}$ is greater than another
component $E_{\sigma}$ ($\sigma'>\sigma$) if the exceptional divisor
of any modification which contains $E_{\sigma'}$ also
contains $E_{\sigma}$. The condition 2) means that this sequence of
blow-ups should be $G$-equivariant, i.e., if a point $x$ is blown-up,
the point $gx$ should be blown-up for each $g\in G$ as well. In
particular, the set $\Gamma$ of the components of the exceptional
divisor is a $G$-set.

{\bf 2.} The condition 4) is equivalent to say that $\pi$ is a resolution of the
curve $C=\bigcup\limits_{i,g}gC_i$ where $g$ runs through all the elements of $G$, $i=1,\ldots,r$. In
particular, $\pi^{-1}(C)$ is a normal crossing divisor on ${\cal X}$.
A smooth irreducible curve $(C_1,0)\subset(\C^2,0)$ has a trivial
$G$-resolution $(\C^2,0){\stackrel{=}{\to}}(\C^2,0)$ if and only if it is
$G$-invariant.
\end{remarks}

The group $G$ acts both on the space ${\cal X}$ of the resolution
and on the exceptional divisor ${\cal D}$. For $\sigma\in\Gamma$,
let $G_{\sigma}$ be the isotropy subgroup of the component
$E_{\sigma}$, i.e. $\{g\in G: g E_{\sigma}= E_{\sigma}\}$. Pay
attention that the isotropy subgroups $G_{\sigma}$ are Abelian for all
$\sigma$ except possibly the first (minimal) one. (In the latter case
$G_{\sigma}$ coincides with the group $G$ itself.) The group
$G_{\sigma}$ acts on the component $E_{\sigma}$. Let
$G_{\sigma}^*\subset G_{\sigma}$ be the isotropy subgroup
$G_x=\{g\in G: gx=x\}$ of a generic point $x\in E_{\sigma}$. (The
group $G_{\sigma}^*$ is always Abelian.) A point $x\in E_{\sigma}$
will be called {\em special} if its isotropy subgroup $G_x$ is
different from $G_{\sigma}^*$. (In this case
$G_x\supset G_{\sigma}^*, G_x\neq G_{\sigma}^*$.) One has a one dimensional representation
$\beta_{x\sigma}$ of the isotropy subgroup $G_x$ of a point
$x\in E_{\sigma}$ in the normal space to $E_{\sigma}$ in $\cal{X}$
at the point $x$.

Let $E_\sigma$ be a component of the exceptional divisor ${\cal D}$, let $x\in
E_\sigma$ be a smooth
point of ${\cal D}$, i.e. a point which is not a point of intersection with other
components of $\{\cal D\}$,  and let $\widetilde L$ be a germ
of a smooth curve on ${\cal X}$ at the point $x$ invariant with respect to the
isotropy group $G_x$ of the point $x$. The curve $L=\pi(\widetilde{L})$ is called a
curvette corresponding to the component $E_{\sigma}$ (and/or to the point $x$).

A $G$-resolution of a curve $(\bigcup_{i=1}^r C_i,0)\subset(\C^2,0)$ can be described
by its dual resolution graph in the following way. The vertices of
this graph correspond to the components $E_{\sigma}$ of the
exceptional divisor $\cal{D}$ (i.e. to the elements of the partially ordered $G$-set
$\Gamma$) and to the strict transforms of the the curves $C_i$
and of their $G$-shifts $gC_i$, $g\in G$.
These vertices should be depicted by bullets and arrows respectively.
There is a natural $G$-action on the set of vertices of the graph
(preserving bullets and arrows).
Two vertices are
connected by an edge if and only if the corresponding components of
the total transform $\pi^{-1}(C)$ of the curve $C=\bigcup\limits_{i,g}gC_i$ intersect.

\begin{remark}
One should have in mind that the described information
(namely the $G$-action on the set of vertices of the graph)
determines the isotropy subgroups $G_{\sigma}$ of the components
and the isotropy subgroups of all the intersection points of
the components of the total transform $\pi^{-1}(C)$ of the curve
$C$ (all the latter subgroups are Abelian).
\end{remark}

The same definition and description apply to a set of divisorial
valuations with the  only difference that in this case the
corresponding divisors should be indicated and there are no strict
transforms of branches. Also Remark~1 above is valid.

In what follows we shall use the following description of the
behaviour of representations under blow-ups. Assume than one has
the complex plane $\C^2$ with a representation of a finite Abelian
group $H$. This representation is the sum of two irreducible
ones, say $\gamma_1$ and $\gamma_2$. Let
$p: (Y,E)\to (\C^2,0)$ be the blow-up of the origin ($E\simeq
\CP^1$). The group $H$ acts on $Y$ as well.

If the
representations $\gamma_1$ and $\gamma_2$ are different then the
$H$-action on $E$ has two special points invariant with respect to $H$.
At one of them the
representation of $H$ in the normal space to $E$ is $\gamma_1$
and in the tangent space to $E$ is $\gamma_2 \gamma_1^{-1}$. At
the other one they are $\gamma_2$ and $\gamma_1 \gamma_2^{-1}$
respectively. The isotropy subgroup $H_x$  of a non-special point
$x$ of $E$ is $H_x = \{h\in H : \gamma_1(h)=\gamma_2(h)\}$. The
representation of $H_x$ in the normal space to $E$ is
$\gamma_1|_{H_x}=\gamma_2|_{H_x}$. (The representation of $H_x$
in the tangent space to $E$ is trivial.)

If the representations $\gamma_1$ and $\gamma_2$ coincide, there
are no special points on $E$, the action of $H$ on $E$ is trivial
and the representation of $H_x=H$ in the normal space to $E$ is
$\gamma_1=\gamma_2$.

Thus the representation of $H$ on $\C^2$ determines the
representations in the tangent and in the normal spaces to $E$ at
all points.

Let $\{C_i\}$ and $\{C'_i\}$, $i=1,\ldots, r$, be two collections of branches in the complex plane
$(\C^2, 0)$ with an action of
a finite group $G$. We say that these collections are $G$-topologically equivalent
if there exists a $G$-invariant
germ of a homeomorphism $\psi:(\C^2, 0)\to(\C^2, 0)$ such that $\psi(C_i)=C'_i$ for $i=1,\ldots,r$. A
version of this definition can be applied to collections of divisorial valuations
as well. A divisorial valuation $v$ on $\OO_{\C^2,0}$ can be described by a
generic pair of curvettes corresponding to the divisor. (Genericity means that the
strict transforms of the curvettes intersect the divisor at different points.)
Two collections of divisorial valuations $\{v_i\}$ and $\{v'_i\}$, $i=1,\ldots,r$,
are said to be $G$-topologically equivalent if the corresponding collections of curvettes
$\{L_{ij}\}$ and $\{L'_{ij}\}$, $i=1,\ldots,r$, $j=1,2$, are $G$-topologically equivalent.

It is clear that the $G$-resolution graph of a collection of curve or divisorial
valuations does not determine the $G$-equivariant topology of the collection.
Moreover the $G$-resolution graphs of collections can be the same for different
actions of the group $G$ on $\C^2$ (e.g. if $G$ is abelian and all blow-ups are
performed at points with the isotropy subgroups $G_x = G$). Even if the representation of
$G$ is fixed, the $G$-resolution graph of a collection of curve or divisorial
valuations does not determine the $G$-topology of the collection.

\begin{examples}
{\bfseries 1.}
Let $\C^2$ be the complex plane with the action of the cyclic group $G=\Z_{15}$
defined by $\sigma * (x, y)=({\sigma}^3x, {\sigma}^{5}y)$, where
$\sigma=\exp({2\pi i}/{15})$ is the generator of the group $\Z_{15}$. Let
$C_i$, $i=1,2,3$, be the curves (given by their parameterizations) $(t,0)$,
$(0,t)$, $(t,t^2)$, respectively and let $C'_i$, $i=1,2,3$, be
the curves $(0,t)$, $(t, 0)$, $(t^2,t)$. An important property of
these curves is that no element of $G$ different from $1$ sends the curve
$C_3$ (or the curve $C'_3$) to itself and that, in
the minimal
$G$-resolution, the
strict transform of the curve $C_3$ (or of the curve $C'_3$) intersects
a component
$E_{\sigma}$ of
the exceptional divisor with $G_{\sigma}=G$ and $G^*_{\sigma}=(e)$. (Thus the strict
transforms of all the $G$-shifts of the curve intersect one and the same
component of the exceptional divisor at different points.)

The minimal $G$-resolution graph is shown on Figure 1 (it is one and the same for both
cases). The partial order on $\Gamma$ is defined by the numbering of the elements of $\Gamma$ (vertices).
The action of $G$ on the set $E_{\sigma}$ of components of the exceptional
divisor is trivial; the curves with the numbers 1 and 2 are $G$-invariant and all the
$G$-shifts of the third curve are different.

\begin{figure}[h]
$$
\unitlength=1.00mm
\begin{picture}(90.00,35.00)(0,0)
\thicklines \put(20,10){\line(1,0){40}} \put(20,10){\circle*{2}}
\put(60,10){\circle*{2}}
\put(20,10){\vector(0,1){20}} \put(60,10){\vector(0,1){20}}
\put(60,10){\vector(2,1){20}} \put(60,10){\vector(2,-1){20}}
\put(75,16){\circle*{0.2}}
\put(75,14){\circle*{0.2}}
\put(75,12){\circle*{0.2}}
\put(75,10){\circle*{0.2}}
\put(75,8){\circle*{0.2}}
\put(75,6){\circle*{0.2}}
\put(75,4){\circle*{0.2}}
\put(16,5){$1$} \put(56,5){$2$}
\put(22,25){$C_2$} \put(62,25){$C_1$} \put(82,20){$C_3$}
\put(82,0){$gC_3$}
\end{picture}
$$
\caption{Example 1}
\label{fig1}
\end{figure}
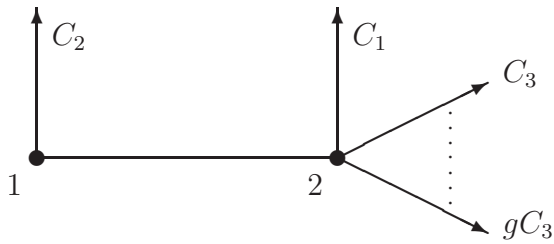

A local $G$-equivariant homeomorphism
$(\C^2,0)\to (\C^2,0)$ which sends the curve $C_i$ to the curve $C'_i$ does not
exists since the isotropy groups of the curves
$C_1\setminus\{0\}$ and
$C'_1\setminus\{0\} = C_2\setminus \{0\}$ are different.
Moreover there is no
homeomorphism
$(\C^2,0)\to (\C^2,0)$ which sends $C_3$ to $C'_3$, $C_1$ to $C'_2$ and $C_2$ to
$C'_1$ because a homeomorphism has to preserve the intersection multiplicities of
branches.

{\bfseries 2.}
The fact that the action of the group $G=\Z_{15}$ on $\C^2\setminus\{0\}$ has
different isotropy subgroups of different points is not really essential. Let
$G = \Z_7$ with the action $\sigma*(x,y)= (\sigma x, \sigma^3 y)$, where
$\sigma = \exp(2 \pi i/7)$. Let $C_i$ and $C'_i$ ($i=1,2,3$) be defined as in
Example 1. The same arguments (based on the intersection multiplicities) implies that a
local homeomorphism
$(\C^2,0)\to (\C^2,0)$ which sends $C_3$ to $C'_3$ should send $C_1$ to $C'_1=C_2$.
However there is no $G$-equivariant local homeomorphism from $(\C,0)$ with the
$G$-action $\sigma*z = \sigma z$ to $(\C,0)$ with the $G$-action
$\sigma*z = \sigma^3 z$.

The argument in Example 1 can be easily adapted to the case of divisorial valuations.

{\bfseries 3.}
Let $G$ be the group $\Z_{15}$ with
the same action on $(\C^2,0)$ as in Example 1. The
divisorial valuation $v$ (resp. $v'$) is defined by the following two curvettes:
$C_1:= \{(t,t^2)\}$ and $C_2:=\{(t,-t^2)\}$
(resp. $C'_1:= \{(t^2,t)\}$ and $C'_2:=\{(-t^2,t)\}$). The minimal resolution graph
for both cases is shown on Figure 2. The component of the exceptional divisor corresponding to the valuation is distinguished (marked by the circle). The $G$-action on it is trivial.

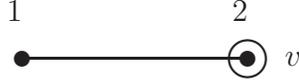
\begin{figure}[h]
$$
\unitlength=1.00mm
\begin{picture}(50.00,10.00)(0,0)
\thicklines
\put(10,2){\line(1,0){30}}
\put(10,2){\circle*{2}}
\put(40,2){\circle*{2}}
\put(40,2){\circle{5}}
\put(8,7){$1$} \put(38,7){$2$}
\put(45,1){$v$}
\end{picture}
$$
\caption{Example 3}
\label{fig2}
\end{figure}

A local $G$-equivariant homeomorphism
$(\C^2,0)\to (\C^2,0)$ should preserve the $x$ and the $y$-axis (because of the
isotropy subgroups). However such homeomorphism can not send a curvette corresponding to
$v$ to a curvette corresponding to $v'$ since it should preserve the intersection
multiplicities of branches (in other words, since $v(x)\neq v'(x)$).
\end{examples}

For an Abelian group $G$, special points on the first component $E_1$
of the exceptional
divisor (obtained by blowing-up the origin in $\C^2$) exist if and only if the
representation of $G$ is the sum of two different one-dimensional representations of
$G$ and they correspond to these representations. If the group $G$ is not Abelian, special
points on $E_1$ correspond to Abelian subgroups of $G$ and their
one-dimensional representations.

\begin{theorem}
Assume that the initial action (representation) of the group $G$ on $\C^2$ is fixed.
Then a $G$-resolution graph of a collection of curve or divisorial valuations with
the correspondence between the ``tails" of the graph (i.e. the connected components
of the graph without the first vertex 1) and the special points on $E_1$ determines
the $G$-topology of the collection of the curves or of the divisorial valuations.
\end{theorem}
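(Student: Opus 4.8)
The plan is to reconstruct the whole equivariant resolution as a decorated $G$-space from the given combinatorial data, and then to transfer it, $G$-equivariantly, to a homeomorphism of the plane carrying one collection to the other. The non-equivariant backbone is classical: the resolution graph already determines the plumbing and hence the embedded topology of the total transform (see \cite{Wall}); the whole point is to superimpose the $G$-action in a way that is forced by the data.

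First I would show that the fixed representation on $\C^2$, together with the correspondence between the tails and the special points on $E_1$, determines, for every component $E_\sigma$ and every point $x$ of it, the isotropy group $G_x$, the normal character $\beta_{x\sigma}$ and the tangent character. This is an induction along the blow-up sequence producing $\pi$, the inductive step being exactly the local dictionary for blow-ups recalled before the theorem: knowing the two characters $\gamma_1,\gamma_2$ of $G_x$ in local coordinates at a point $x$ about to be blown up, one reads off the two special points of the new component and their characters (or the absence of special points when $\gamma_1=\gamma_2$), while the $G$-orbits of the centres of blow-ups are read off the $G$-action on the vertices of the graph. The essential observation is that the only component carrying two special points that are not intersection points with previously created components is the first one $E_1$; for every later component at least one, and generically both, of its special points is an intersection point with an older component and is therefore pinned down by an edge of the graph. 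Thus the single genuine indeterminacy --- which of the two characters $\gamma_1,\gamma_2$ a given tail follows off $E_1$ --- is precisely the information encoded in the correspondence. (When $\gamma_1=\gamma_2$ there are no special points on $E_1$ and the correspondence is vacuous, as it should be; in the non-Abelian case the special points on $E_1$ correspond, by the stated rule, to Abelian subgroups and their characters, and the same induction applies.)

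Next, given two collections $\{C_i\}$ and $\{C_i'\}$ with the same data, I would build a $G$-equivariant homeomorphism $\Psi\colon{\cal X}\to{\cal X}'$ of the two resolution spaces matching the exceptional divisors, the marked strict transforms (resp. the distinguished divisors, resp. the curvettes) and commuting with $G$. One proceeds along the plumbing structure: on a representative $E_\sigma$ of each $G$-orbit of components one chooses a $G_\sigma$-equivariant homeomorphism $E_\sigma\to E_\sigma'$ matching special points, intersection points and attaching points of tails --- possible because $E_\sigma,E_\sigma'\cong\CP^1$ carry $G_\sigma$-actions with identical orbit data by the previous paragraph --- then transports it over the orbit by $G$, and extends it to a tubular neighbourhood using the identified normal characters $\beta_{x\sigma}$. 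Gluing these over the intersection points, where the data match by construction, yields $\Psi$. Finally, since $\pi$ and $\pi'$ are isomorphisms over $\C^2\setminus\{0\}$, the map $\Psi$ descends to a germ of a $G$-equivariant homeomorphism $\psi\colon(\C^2,0)\to(\C^2,0)$ with $\psi(C_i)=C_i'$ (resp. carrying the curvettes of $v_i$ to those of $v_i'$), which is exactly the asserted $G$-topological equivalence.

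I expect the main obstacle to be the equivariant gluing in the middle step: one must choose the component-by-component homeomorphisms so that they simultaneously commute with all the isotropy groups at the special points (where the isotropy jumps), agree on the overlaps, and are consistent over each $G$-orbit. The matching of all normal and tangent characters established in the first step is exactly what removes the obstruction to doing this, but assembling the local equivariant models into a single $G$-homeomorphism --- the equivariant refinement of the classical plumbing argument --- is the delicate part.
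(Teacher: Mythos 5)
Your proposal is sound and reaches the same two conclusions the paper needs (that the data pin down every local isotropy group and every tangent/normal character, and that this suffices to build a $G$-homeomorphism upstairs which is then blown down), but it organizes the second half differently. The paper does not perform a one-shot equivariant plumbing at the end: it constructs the homeomorphism \emph{inductively along the blow-up sequence}, starting from the canonical analytic identification of the two first blow-ups (available because the representation on $\C^2$ is fixed), and at each stage corrects the map by a smooth $G$-invariant isotopy so that centres of future blow-ups match, while keeping the map \emph{complex analytic in neighbourhoods of those centres}. That last point is the device you are missing: because the map is holomorphic near each point about to be blown up, the next blow-up transports the identification automatically and analytically to the newborn components, so the delicate equivariant gluing of tubular neighbourhoods over intersection points --- which you correctly single out as the main obstacle of the plumbing route and leave unresolved --- simply never arises. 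Your first step (the induction showing that only the $E_1$-tail assignment is genuinely ambiguous, since every later component has at most one special point not pinned by an edge) is a nice explicit articulation of what the paper states more tersely via the blow-up dictionary, and your observation that the correspondence is vacuous when $\gamma_1=\gamma_2$ matches the paper. If you want to keep the plumbing formulation you would still need to justify the equivariant gluing (conjugacy of the $G_\sigma$-actions on $\CP^1$ plus matching of normal characters does give compatible local models at the intersection points, but this has to be written out); adopting the paper's inductive scheme, with the holomorphicity bookkeeping, is the cleaner way to discharge it. Note also the paper's small final correction: the inductive construction only matches the \emph{intersection points} of the strict transforms with the exceptional divisor, and a last local $G$-orbit-wise adjustment is needed to send the strict transforms themselves to the strict transforms.
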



\begin{proof}
The case of divisorial valuations is formulated in terms of curves (via pairs of corresponding curvettes) and thus follows from the curve case. We shall show that if $G$-resolutions of the collections of curves $\{C_i\}$ and $\{C'_i\}$, $i=1, \ldots, r$, are described by the same data (i.e. they lie in the same $\C^2$ with a $G$-representation, have the same $G$-resolution graphs and the same correspondences between the tails of the graphs and the special points on $E_1$), then there exists a $G$-equivariant homeomorphism (in fact a $C^{\infty}$-diffeomorphism) $\psi:({\cal{X, D}})\to({\cal{X', D'}})$ of a neighbourhood of the exceptional divisor $\cal D$ in $\cal X$ to a neighbourhood of $\cal D'$ in $\cal X'$ sending the strict transforms $\widetilde{C}_i$ of the curves $C_i$ to the strict transforms $\widetilde{C}'_i$ of the curves $C'_i$. Blowing down this diffeomorphism one obtains the required homeomorphism $(\C^2,0)\to(\C^2,0)$.

Such diffeomorphism can be constructed inductively following the processes of the $G$-resolutions. We shall show that after each blow-up (or rather after a set of blow-ups at the points from one $G$-orbit) one can construct a $G$-equivariant diffeomorphism of a neighbourhood of the new exceptional divisor(s) in the first resolution to a neighbourhood of the one(s) in the second resolution which sends intersection points of the strict transforms of the branches $C_i$ with the exceptional divisor to the corresponding points for the branches $C'_i$. Moreover we shall show that this can be made in such a way that the diffeomorphism remains complex analytic in neighborhoods of all these intersection points.

After the first blow-up one has an identification (an analytic one) of the first exceptional divisors and of their neighbourhoods in the both resolutions. This identification keeps special points. There can be some intersection points of the strict transforms of the branches $C_i$ with the exceptional divisor which are not special ones and the corresponding points for the branches $C'_i$. The diffeomorphism has to send first points to the latter ones. This can be made by a (smooth) isotopy of the initial diffeomorphism (the identification in this case), say, with the help of a $G$-invariant vector field which brings images of the points to the required ones. Moreover this can be made so that the diffeomorphism remains complex analytic in neighbourhoods of these points.

The description of the behaviour of a representation under the blow-up shows that, for each point $x$ in the exceptional divisor of the first resolution, its isotropy subgroup $G_x$ and its representations in the tangent and in the normal spaces to the exceptional divisor coincide with those for the corresponding image $x'=\psi(x)$.

The same construction takes place at each step of the resolution process. Assume that a point $x$ of the exceptional divisor of the first resolution has to be blown-up (and thus its image $x'$ under the constructed diffeomorphism as well).

One has two somewhat different situations. The point $x$ (and
thus the point $x'$) is either the intersection point of two
components of the exceptional divisor or a point of only one
component. In the first case one has the representations of the
group $G_x = G_{x'}$ in the tangent spaces to the components. In
the second case one has representations of this group in the
tangent space to the component and in the normal one.
If one fixes local coordinates at the point $x$ in which the
representation of $G_x$ is diagonal, then the diffeomorphism (being
complex analytic by the induction
supposition) defines local
coordinates at the point $x'=\psi(x)$ identifying (complex
analytically) neighbourhoods of $x$ and of $x'$. This gives
complex analytic isomorphisms of  the new born components and of
their tubular neighbourhoods. As above this isomorphism, in
general, does not send intersection points of the new born
components of the first resolution with the strict transforms of
the curves $C_i$ to the corresponding points on the component in
the second resolution. However this can be corrected by a smooth
isotopy which remains complex analytic in neighbourhoods of the
points under consideration.

In this way one gets a diffeomorphism of neighbourhoods of the
exceptional divisors ${\cal D}$
and ${\cal D'}$ of the resolutions $\pi$ and $\pi'$ which sends
the intersection points of the strict transforms of the curves $C_i$
with ${\cal D}$ (and thus of their shifts $gC_i$) to the corresponding points for the curves
$C'_i$. Generally speaking the strict transforms of the curves
$C_i$ do not go to the strict transforms of the curves $C'_i$.
(In fact this may happen only if the isotropy group of the
corresponding point on ${\cal D}$ is trivial.)
This can be corrected by a local diffeomorphism in a
neighbourhood of the intersection point which should be
duplicated at all the points of the $G$-orbit.
\end{proof}

\section{$G$-equivariant Poincar\'e series}\label{defPoin}

In \cite{RMC} the $G$-equivariant Poincar\'e series of a
multi-index filtration defined by a set of valuations or order
functions was defined as an element of the Grothendieck ring of
$G$-sets with an additional structure.

Let $(V,0)$ be a germ of a complex analytic variety with an action
of a finite group $G$. The group $G$ acts on the ring $\OO_{V,0}$ of
germs of functions on $(V,0)$: $g^* f(x)= f(g^{-1}x)$ ($f\in
\OO_{V,0}$, $g\in G$, $x\in V$). A function $v: \OO_{V,0}\to \Z_{\ge
0} \cup \{+\infty\}$ is called an {\it order function} if $v(\lambda
f)= v(f)$ for a non-zero $\lambda\in\C$ and $v(f_1+f_2)\ge \min
\{v(f_1), v(f_2)\}$. (If besides that $v(f_1 f_2)=v(f_1)+v(f_2)$,
the function $v$ is a valuation.) A multi-index filtration of the
ring $\OO_{V,0}$ is defined by a collection $\seq v1r$ of order
functions:
\begin{equation}\label{eq1}
J(\vv) = \{ f\in \OO_{V,0} : \vv(f)\ge \vv\}\; ,
\end{equation}
where $\vv=(\seq v1r)\in \Z^{r}_{\ge 0}$, $\vv(f)= (v_1(f), \ldots,
v_r(f))$ and  $(\seq{v'}1r)\ge (\seq{v''}1r)$ if and only if $v'_i\ge
v''_i$ for all $i=1,\ldots, r$. We assume that the filtration
$J(\vv)$ is {\it finitely determined}, i.e, for any $\vv\in
\Z^{r}_{\ge 0}$, there exists an integer $k$ such that ${\mathfrak
m}^k\subset J(\vv)$ where ${\mathfrak m}$ is the maximal ideal in
$\OO_{V,0}$.

Let $\P\OO_{V,0}$ be the projectivization of the ring $\OO_{V,0}$.
In \cite{IJM} there were defined the notions of cylindric subsets of
$\P\OO_{V,0}$, their Euler characteristics and the integral with
respect to the Euler characteristic over $\P\OO_{V,0}$. In the same way these notions
can be defined for the factor $\P\OO_{V,0}/G$ of $\P\OO_{V,0}$ by the
action of $G$. The (usual) Poincar\'{e} series of the multi-index
filtration can be defined as
$$
P_{\{v_i\}} (\seq t1r) = \int\limits_{ \P \OO_{V,0}} \tt^{\,\vv(f)}
d\chi\; ,
$$
where $\tt=(\seq t1r)$, $\tt^{\,\vv}= t_1^{v_1}\cdot\ldots \cdot
t_r^{v_r}$; $\tt^{\,\vv(f)}$ is considered as a function on $\P
\OO_{V,0}$ with values in the ring (Abelian group) $\Z[[\seq
t1r]]$: see \cite{IJM}.

\begin{definition}
A (``locally finite") $(G,r)\mbox{-set}$ $A$ is a triple $(X, \ww,
\alpha)$ where
\begin{itemize}
\item $X$ is a $G$-set, i.e. a set with a $G$-action;
\item $\ww$ is a function on $X$ with values in
$\Z^r_{\ge 0}$;
\item $\alpha$ associates to each point $x\in X$ a
one-dimensional representation $\alpha_{x}$ of the isotropy group
$G_x= \{ a\in G : ax =x\}$ of the point $x$;
\end{itemize}
satisfying the following conditions:
\begin{enumerate}
\item[1)] $\alpha_{ax}=a \alpha_x a^{-1}$ for $x\in X$,
$a\in G$;
\item[2)] for any $\ww\in \Z^r_{\ge 0}$ the set
$\{x\in X^A: \ww(x)\le \ww\}$ is finite.
\end{enumerate}
\end{definition}

All (locally finite) $(G,r)$-sets form an Abelian semigroup in which
the sum is defined as the disjoint union. The Cartesian product
appropriately defined (see \cite{RMC}) makes the semigroup of
$(G,r)$-sets a semiring. Let $K_0((G,r)\mbox{-sets})$ be the
corresponding Grothendieck ring~--- the Grothendieck ring of
(locally finite) $(G,r)$-sets.

Let the filtration $\{J(\vv)\}$ be defined by the order
functions $\seq v1r$ by (\ref{eq1}). For an element $f\in
\P\OO_{V,0}$, let $G_f$ be the isotropy group of the corresponding point
of $\P\OO_{V,0}$: $G_f =\{a \in G : a^*f =\lambda_f(a)f\}$. The map
$a\mapsto\lambda_f(a)$ defines a one-dimensional representation
$\lambda_f$ of the group $G_f$.
For an element $f\in \P\OO_{V,0}$,
let $T_{f}$ be the element of the Grothendieck ring
$K_0((G,r)\mbox{-sets})$ represented by the orbit $Gf$ of $f$ (as a
$G$-set) with  $\ww^{T_f}(a^*f)=\vv(a^*f)$ and
$\alpha^{T_f}_{a^*f}=\lambda_{a^*f}$ ($a\in G$).

Let us consider $T: f\mapsto T_f$ as a function on $\P \OO_{V,0}/G$
with values in the Grothendieck ring $K_0((G,r)\mbox{-sets})$. This function is cylindric
and integrable (with respect to the Euler characteristic).

\begin{definition} (\cite{RMC}).
The equivariant Poincar\'{e} series $P^G_{\{v_i\}}$ of the
filtration $\{J(\vv)\}$ is defined by
$$
P^G_{\{v_i\}} = \int\limits_{\P\OO_{V,0}/G} T_f d\chi \in
K_0((G,r)\mbox{-sets}) \; .
$$
\end{definition}

Let all the order functions $v_i$ defining the filtration $\{J(\vv)\}$, $i=1, 2, \dots, r$,
be either curve or divisorial valuations. (In general it is possible that some of them
are curve valuations and some of them are divisorial ones, but we shall not consider
this case here.) Let $\pi: ({\cal{X, D}})\to (\C^2,0)$ be a $G$-resolution of this set of valuations: see above.

For a collection of divisorial valuations $\{v_i\}$, let
${\stackrel{\bullet}{E}}_\sigma$ be the ``smooth part" of the
component $E_\sigma$ in ${\cal D}$, i.e. $E_\sigma$ itself minus intersection
points with all other components of the exceptional divisor ${\cal
D}$.
For a collection of curve valuations $\{v_i\}$ corresponding to branches ${C_i}$, let
${\stackrel{\bullet}{E}}_\sigma$ be the ``smooth part" of the
component $E_\sigma$ in the total transform $\pi^{-1}(C)$, i.e. $E_\sigma$ itself minus
intersection points of all the components of  $\pi^{-1}(C)$, $C= \bigcup\limits_{i,g} g C_i$.
Let ${\stackrel{\bullet}{\cal
D}}=\bigcup\limits_\sigma{\stackrel{\bullet}{E}}_\sigma$ and let
$\widehat{\cal D}={\stackrel{\bullet}{\cal D}}/G$ be the
corresponding factor space, i.e. the space of orbits of the action
of the group $G$ on ${\stackrel{\bullet}{\cal D}}$. Let $p:
{\stackrel{\bullet}{\cal D}} \to \h{\cal D}$ be the factorization
map.

For $x\in {\stackrel{\bullet}{\cal D}}$, let the corresponding curvette $L_x$ be given by an
equation $h'_x=0$, $h'_x\in\OO_{\C^2,0}$. Let
$h_x = \sum\limits_{g\in
G_x}{\dfrac{h'_x}{g^*h'_x}}(0)\cdot g^*
h'_x$. The germ $h_x$ is $G_x$-equivariant and $\{h_x=0\}=L_x$.
Moreover, in what follows we assume that the germ $h_x$ is fixed
this way for one point $x$ of each $G$-orbit and is defined by
$h_{gx} = g h_x g^{-1}$ for other points of the orbit.
(This defines $h_{gx}$ modulo a constant factor.)

Let $\{\Xi\}$ be a stratification of the space (in fact of a
smooth curve) $\widehat{\cal D}$ (${\widehat{\cal
D}}=\coprod \Xi$) such that:
\begin{enumerate}
\item[1)] each stratum $\Xi$ is connected;
\item[2)] for each point $\h{x}\in \Xi$ and for each point $x$
from
its preimage $p^{-1}(\h{x})$, the conjugacy class of the
isotropy
subgroup $G_x$ of the point $x$ is the same, i.e. depends only on
the stratum $\Xi$.
\end{enumerate}
The last is equivalent to say that, over each
stratum $\Xi$, the map
$p: {\stackrel{\bullet}{\cal D}} \to \h{\cal D}$ is a covering.

For a component $E_{\sigma}$ of the exceptional divisor
${\cal D}$, let $v_{\sigma}$ be the corresponding divisorial
valuation on the ring $\OO_{\C^2,0}$: for $f\in \OO_{\C^2,0}$,
$v_\sigma(f)$ is the order of zero of the lifting $f\circ\pi$ of
the function $f$ along the component $E_\sigma$. Let
$\{\sigma_1,\ldots, \sigma_r\}$ be a subset of $\Gamma$, and let $\seq v1r$ be
the corresponding divisorial valuations. They define the
multi-index filtration (\ref{eq1}).

For a point $x\in {\stackrel{\bullet}{\cal D}}$, let $T_x$ be the
element of the Grothendieck ring $K_0((G,r)\linebreak[0]\mbox{-sets})$ defined
by $T_x = T_{h_x}$ where $h_x$ is a $G_x$-equivariant function
defining a curvette at the point $x$.
The element $T_x$ is well-defined. i.e. does not depend on the
choice of the function $h_x$. One can see that the
element $T_x$ is one and the same for all points from the
preimage of a stratum $\Xi$ and therefore it will be denoted by
$T_{\Xi}$.

\begin{theorem} {\rm \cite{RMC}}
\begin{equation}\label{eq2}
P^G_{\{v_i\}} = \prod\limits_{\{\Xi\}} (1-T_{\Xi})^{-\chi(\Xi)}
\; .
\end{equation}
\end{theorem}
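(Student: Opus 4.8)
The plan is to evaluate the integral defining $P^G_{\{v_i\}}$ directly, by the integration-over-the-Euler-characteristic technique of \cite{IJM} carried out on the quotient $\P\OO_{\C^2,0}/G$, and then to reorganise the answer as a product over the strata of $\h{\cal D}$ by means of the (equivariant) power structure on the Grothendieck ring $K_0((G,r)\mbox{-sets})$. The geometric input is the $G$-resolution $\pi$. For a germ $f\in\OO_{\C^2,0}$ the divisor of the lifting $f\circ\pi$ is $\w f+\sum_\sigma v_\sigma(f)E_\sigma$, and the multi-index $\vv(f)$ is read off from the way the strict transform $\w f$ meets the smooth part $\stackrel{\bullet}{\cal D}$: if $\w f$ meets $\stackrel{\bullet}{\cal D}$ transversally at the points $x_1,\ldots,x_s$ with multiplicities $k_1,\ldots,k_s$, then, by the intersection-number description of a divisorial valuation, $\vv(f)=\sum_j k_j\,\mm(x_j)$ with $\mm(x_j)=\vv(L_{x_j})$ the valuation vector of the curvette at $x_j$. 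In the equivariant picture the isotropy group $G_f$ of the point $[f]\in\P\OO_{\C^2,0}$ and its character $\lambda_f$ are assembled, in the same way, from the $G$-orbit of the configuration $\{x_j\}$ and from the curvette characters $\alpha_{x_j}$; this is exactly the content packaged in the classes $T_{x_j}=T_{h_{x_j}}$, and the non-transversal germs form the lower-dimensional strata controlled by the cylindric decomposition.

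First I would isolate the local, per-point factor. Fix a stratum $\Xi$ of $\h{\cal D}$ and a point $x\in\stackrel{\bullet}{E}_\sigma$ lying over $\Xi$. The germs whose strict transform meets the orbit $Gx$ with multiplicity $k$ (and misses every other orbit) form a cylindric subset of $\P\OO_{\C^2,0}/G$ whose value under the integrand is the $k$-fold product $T_x^{\,k}$ in $K_0((G,r)\mbox{-sets})$: its weight is $k\,\mm(x)$ and the character of its isotropy group is $\alpha_x^{\,k}$, both built into the definition of $h_x$ and of the product of $(G,r)$-sets. Because $\mm(x)\neq0$, the local-finiteness condition 2) in the definition of a $(G,r)$-set makes the geometric series meaningful, and summing over $k\ge0$ produces the per-point factor $(1-T_x)^{-1}$. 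By the observation preceding the theorem the class $T_x=T_\Xi$ is constant along the stratum, so this factor equals $(1-T_\Xi)^{-1}$.

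Next I would assemble the factors. Choosing the intersection point inside a stratum and choosing the multiplicity at distinct points are independent operations, so the integral over $\P\OO_{\C^2,0}/G$ is multiplicative over configurations; integrating the constant factor $(1-T_\Xi)^{-1}$ over the stratum $\Xi$ with respect to the Euler characteristic raises it to the power $\chi(\Xi)$ in the equivariant power structure, giving $(1-T_\Xi)^{-\chi(\Xi)}$. Taking the product over the finitely many strata $\{\Xi\}$ of $\h{\cal D}$ yields
$$
P^G_{\{v_i\}}=\prod_{\{\Xi\}}(1-T_\Xi)^{-\chi(\Xi)}\,,
$$
as claimed. The curve and the divisorial cases differ only in the definition of $\stackrel{\bullet}{\cal D}$, hence of the strata and of $\chi(\Xi)$: for curves one removes in addition the points at which the strict transforms of the $C_i$ and of their $G$-shifts meet ${\cal D}$.

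The step I expect to be the main obstacle is the equivariant multiplicativity: checking that the value $T_f$ of the integrand at a germ whose strict transform meets $\stackrel{\bullet}{\cal D}$ along the $G$-orbit of a configuration $\{(x_j,k_j)\}$ is genuinely the product $\prod_j T_{x_j}^{\,k_j}$ in $K_0((G,r)\mbox{-sets})$, as $G$-sets equipped with the weight $\ww$ and with the character datum $\alpha$. The non-trivial content is the additivity of the weight and the (tensor) multiplicativity of the characters $\lambda_f$ under this decomposition, together with the compatibility of the $G$-action on the orbit of the configuration with the product in the Grothendieck ring. Here the description, recalled before the theorem, of how a representation transforms under a blow-up---pinning down $G_x$, the normal character $\beta_{x\sigma}$ and the curvette character $\alpha_x$ at every point---is exactly what makes the bookkeeping consistent. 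Once this equivariant multiplicativity and the equivariant power structure are in hand, the product formula follows by the same Euler-characteristic computation as in the non-equivariant cases \cite{divisorial,duke}.
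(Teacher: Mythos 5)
The paper itself does not prove this theorem: it is imported verbatim from \cite{RMC}, so there is no in-paper argument to measure your proposal against. That said, your outline is the standard route, the one followed in \cite{RMC} and in its non-equivariant antecedents \cite{duke,divisorial}: read $\vv(f)$ off the intersection of the strict transform of $\{f=0\}$ with $\stackrel{\bullet}{\cal D}$, fibre $\P\OO_{\C^2,0}/G$ over the spaces of configurations of points of $\widehat{\cal D}$ with multiplicities, and convert the integral into a product of per-stratum factors via the Macdonald/power-structure identity $\sum_k\chi(S^k\Xi)\,T_\Xi^{\,k}=(1-T_\Xi)^{-\chi(\Xi)}$.

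As a proof, however, what you have written is a scaffold: both load-bearing steps are asserted rather than established. First, the reduction of the integral to configurations needs the lemma that, for a fixed configuration $\{(x_j,k_j)\}$, the cylindric set of germs $f$ whose strict transform meets $\stackrel{\bullet}{\cal D}$ transversally exactly along its $G$-orbit fibres over the configuration space with fibres of Euler characteristic $1$ (after projectivization), and that the degenerate germs (tangential intersections, passage through the deleted intersection points) contribute zero; you allude to this only as being ``controlled by the cylindric decomposition''. Second, the identity $T_f=\prod_j T_{x_j}^{\,k_j}$ in $K_0((G,r)\mbox{-sets})$, which you correctly single out as the main obstacle, is exactly where the definitions of $h_x$ and of the Cartesian product of $(G,r)$-sets earn their keep: one needs that such an $f$ agrees, up to a unit and up to terms affecting neither $\vv(f)$ nor $\lambda_f$, with $\prod_j h_{x_j}^{k_j}$, whence $G_f$ is the stabilizer of the weighted configuration and $\lambda_f=\prod_j\alpha_{x_j}^{k_j}$ on it. Since you explicitly defer precisely this verification, the proposal is a correct plan but not yet a proof; completing it amounts to carrying out the first step as in \cite{IJM} and the second as in \cite{RMC}.
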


\begin{statement}
The initial action (representation) of the group $G$ on $\C^2$, the $G$-resolution
graph of a collection of curve or divisorial valuations plus the correspondence
between the tails of the graph and the special points on $E_1$ determine the
$G$-Poincar\'e series of the set of valuations.
\end{statement}

\begin{proof}
In order to compute the $G$-Poincar\'e series $P^G_{\{v_i\}}$
using Equation~(\ref{eq2}) one has to describe the stratification $\{\Xi\}$
and to
determine, for each stratum $\Xi$, its Euler characteristic, the
corresponding isotropy subgroup $G_{\Xi}$ (the $G$-set
representing $T_{\Xi}$ is just $G/G_{\Xi}$), the function
$\ww_{\Xi}$ and the one-dimensional representation $\alpha_x$ of
the isotropy subgroup $G_x$, $x\in p^{-1}(\Xi)$.
Theorem 1 implies the
the stratification and the corresponding isotropy subgroups are
determined by the described data.
The function $\ww_\Xi$, i.e. the
values for the corresponding curvettes, is computed from the
resolution graph in the standard way (it depends only on the
corresponding component of the exceptional divisor). Thus the only
remaining problem is to determine the representation $\alpha_x$ for $x\in \Xi$.
Let $u$ and $v$ be local $G_x$-equivariant coordinates in a
neighbourhood of the point $x$ such that the corresponding
component of the exceptional divisor is given by the equation
$u=0$. By Theorem 1, the action of $G_x$ on the function $u$ (or rather the dual
action in the normal space) is determined by the described data.
Let $\omega = dx\wedge dy$ be a $G$-equivariant 2-form
form on $\C^2$. The representation of $G$ on $\C^2$ determines
the action of $G$ on (the one dimensional space generated by)
$\omega$. One has
$(\pi^* \omega)_x = \varphi(u,v) u^{\nu} du\wedge dv$, where
$\varphi(0,0)\neq 0$ and the multiplicity $\nu$ is determined by
the resolution graph (see, e.g., \cite[Section 8.3]{Wall}. From the action of $G_x$ on $\omega$ and on
$u$ one gets the action on $v$. If $h_x=0$ is a $G$-equivariant
equation of the curvette $\pi(\{v=0\})$  at the point $x$ one has
$\pi^* h_x = \psi(u,v)u^{m} v$, where $\psi(0,0)\neq 0$ and $m$
can be computed from the resolution graph. Thus the action
$\alpha_x$ of $G_x$ on $h_x$ is also determined.
\end{proof}

In what follows we shall use the following property of the the Grothendieck ring
$K_0((G,r)\mbox{-sets})$.
The ring $K_0((G,r)\mbox{-sets})$ has the maximal ideal $\mathfrak{M}$ which is
generated by all irreducible $(G,r)$-sets different from $1$. Let an element $P$
belong to $1+\mathfrak{M}$. Then it has a unique representation in the ``A'Campo
type form": $P=\prod(1-T)^{s_T}$, where $T$ runs over all irreducible elements of
$K_0((G,r)\mbox{-sets})$ and $s_T$ are integers.
In general this product contains infinitely many factors. Theorem 2 implies that if
$P$ is the $G$-Poincar\'e series of a set of curve or divisorial valuations on $\OO_{\C^2,0}$, then this
product is finite.

\section{$G$-Poincar\'e series and $G$-topology of
sets of divisors}\label{sec3}

Let $(\C^2,0)$ be endowed by a $G$-action and let $\{v_i\}$,
$i=1,\ldots,r$, be a set of divisorial valuations on
$\OO_{\C^2,0}$.

\begin{theorem}\label{div_top}
The $G$-equivariant Poincar\'e series $P^G_{\{v_i\}}$ of the set
of the divisorial valuations $\{v_i\}$ determines the
$G$-equivariant topology of the set of divisorial valuations.
\end{theorem}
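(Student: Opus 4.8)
The plan is to reconstruct the $G$-resolution graph, together with its correspondence to the special points on $E_1$, from the $G$-Poincaré series $P^G_{\{v_i\}}$. Once this is done, Theorem~1 (applied in the divisorial case, which the proof of Theorem~1 reduces to the curve case via curvettes) immediately yields that the $G$-topology of the collection is determined. So the real content is an \emph{inversion}: extracting the combinatorial-equivariant resolution data from the series. The starting point is the A'Campo-type factorization property recorded just before Section~3: since $P^G_{\{v_i\}}$ lies in $1+\mathfrak{M}$, it has a \emph{unique} finite expression $\prod_{\Xi}(1-T_{\Xi})^{-\chi(\Xi)}$, where the factors are indexed by the strata $\Xi$ of $\widehat{\cal D}$ and each $T_{\Xi}=G/G_{\Xi}$ carries the weight $\ww_{\Xi}\in\Z^r_{\ge 0}$ and the representation data $\alpha_x$. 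Thus from $P^G_{\{v_i\}}$ alone one can read off, for each stratum, the triple $(\chi(\Xi),\,G_{\Xi},\,\ww_{\Xi},\,\alpha)$, and this is the raw material from which the graph must be rebuilt.

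First I would recover the underlying (non-equivariant, but $G$-labelled) dual graph from the weight vectors $\ww_{\Xi}$. The value $\ww_{\Xi}=(v_1(L_x),\ldots,v_r(L_x))$ of a curvette at a generic point of $E_\sigma$ equals the intersection-multiplicity vector, and these vectors for the distinguished components $E_{\sigma_1},\ldots,E_{\sigma_r}$ encode the relevant entries of the intersection matrix of the resolution. This is precisely the classical situation of the non-equivariant theorem of \cite{FAOM}: the ordinary Poincaré series of a divisorial filtration determines the minimal resolution graph. Applying the change-of-variables/forgetful map $K_0((G,r)\text{-sets})\to\Z[[\tt]]$ that sends each $(G,r)$-set to its cardinality recovers the ordinary Poincaré series from $P^G_{\{v_i\}}$, and hence the whole (minimal) resolution graph $\Gamma$ as a partially ordered set, including the arrows marking the distinguished divisors. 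The multiplicities $\chi(\Xi)$ and the way the strata fit together over each vertex then tell how many special and intersection points sit on each $E_\sigma$, reconstructing the graph combinatorially.

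Next comes the equivariant refinement: the isotropy data $G_{\Xi}$ (read off from $T_{\Xi}=G/G_{\Xi}$) and the representations $\alpha_x$ give the $G$-action on $\Gamma$ and, crucially, the correspondence between the tails of the graph and the special points of $E_1$. The representation $\alpha_x$ at a generic point of $E_\sigma$ is, by the computation in the proof of Statement~1, built from the one-dimensional normal representation $\beta_{x\sigma}$ together with the action on $\omega=dx\wedge dy$; reversing that computation lets one extract $\beta_{x\sigma}$, and the behaviour-under-blow-up rules of Section~1 propagate this back down the tree to identify which $1$-dimensional representation of which subgroup of $G$ labels the special point on $E_1$ from which each tail emanates. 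Feeding the reconstructed data—fixed representation on $\C^2$, the $G$-graph, and the tail/special-point correspondence—into Theorem~1 then finishes the argument.

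The main obstacle I anticipate is the last extraction step: disentangling $\alpha_x$ to genuinely recover $\beta_{x\sigma}$ (and thereby the tail-to-special-point correspondence) rather than merely the isotropy subgroup $G_{\Xi}$. The weight $\ww_{\Xi}$ and the abstract group $G_{\Xi}$ are comparatively easy to read off, but the one-dimensional representation $\alpha_x$ mixes the normal-bundle character, the canonical class contribution $u^{\nu}$, and the factor $u^m$ from $\pi^*h_x$; one must check that these contributions can be separated using only data already determined (the multiplicities $\nu$ and $m$ are computable from the graph, as noted in Statement~1), so that $\beta_{x\sigma}$ is uniquely pinned down. Verifying that this separation is always possible—equivalently, that the map from equivariant resolution data to the A'Campo factorization is injective on the relevant representation labels—is where the care is needed, and it is exactly the point at which the curve case fails (as Examples~1--3 show) but the divisorial case, with its genericity of curvettes and absence of strict-transform arrows constraining the tails, succeeds.
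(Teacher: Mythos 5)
Your overall architecture matches the paper's: recover the resolution graph with its $G$-action via the non-equivariant result of \cite{FAOM}, then recover the representation of $G$ on $\C^2$ and the tail/special-point correspondence, and feed everything into Theorem~1. However, the step you yourself flag as the main obstacle is exactly where your proposed route breaks down, and the paper resolves it by a different device. You propose to extract the normal-bundle character $\beta_{x\sigma}$ by ``reversing'' the computation of $\alpha_x$ from the proof of Statement~1; but that computation takes as input the action of $G$ on $\omega=dx\wedge dy$, i.e.\ the determinant of the representation of $G$ on $\C^2$ --- and in Theorem~3 that representation is precisely what must be recovered from the Poincar\'e series (it is not given, unlike in Statement~1 and Theorem~1 where it is assumed fixed). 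So the inversion is circular as stated. Moreover, even granting the determinant, the generic stratum of a component only sees the restriction of the two characters to the subgroup where they agree, so propagating ``back down the tree'' does not by itself separate $\gamma_1$ from $\gamma_2$ or assign them to tails. The paper's actual mechanism is to locate, in the A'Campo-type decomposition, a factor $(1-T_\Xi)^{-1}$ with $T_\Xi$ represented by a one-point $G$-set whose component carries a \emph{smooth} curvette (a maximal such component with $G_\tau=G$); such a factor cannot be cancelled, and since the curvette is smooth its equivariant equation is a linear function up to higher order, so $\alpha^{T_\Xi}$ \emph{directly} equals the $G$-action on a linear coordinate --- no disentangling of $\beta$, $\nu$ and $m$ is needed. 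Doing this for each tail gives both the representation and the tail correspondence at once.

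Two further gaps: (i) you apply the forgetful map to $P^G_{\{v_i\}}$ and invoke \cite{FAOM} to get ``the'' minimal resolution graph, but that yields only the minimal resolution graph of the $r$ given divisors, not the minimal \emph{$G$-resolution} graph, which must also resolve all the shifts $gE_i$; the paper first passes to the enlarged collection $\{v_{ig}\}$ indexed by $\{1,\dots,r\}\times G$ (showing $P^G_{\{v_i\}}$ determines $P^G_{\{v_{ig}\}}$ and hence $P_{\{v_{ig}\}}$) precisely so that \cite{FAOM} returns the full $G$-graph together with the induced $G$-action on it; (ii) you do not treat the non-Abelian case, which the paper handles by restricting to each cyclic subgroup $\langle g\rangle$ to recover the character value at $g$, and then running the Abelian argument for the subgroups attached to the special points of $E_1$. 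These are fixable, but as written the proposal does not close the central step.
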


\begin{proof}
Let $v_{ig}(\varphi):= v_i((g^{-1})^* \varphi)$ be the
valuation defined by the shift $gE_i$ of the component $E_i$. Let
$\{v_{ig}\}$ be the corresponding set of valuations on the ring
$\OO_{\C^2,0}$ numbered by the set $\{1,\ldots,r\}\times G$. One
can easily see that the $G$-Poincar\'e series $P^G_{\{v_i\}}$ of
the set of valuations $\{v_i\}$ determines the $G$-Poincar\'e
series $P^{G}_{\{v_{ig}\}}$ of the set $\{v_{ig}\}$.
Indeed,
$P^{G}_{\{v_{ig}\}}$ is represented by the same $G$-set $X$
as
$P^G_{\{v_i\}}$ with the same function $\alpha$ and with
$\ww: X\to \Z_{\ge 0}^{r\vert G\vert}$ defined by
$w_{ig}(x)=w_i((g^{-1})^* x)$, $x\in X$.
As it was explained in \cite[Statement 2]{RMC} the
$G$-Poincar\'e series $P^G_{\{v_{ig}\}}$ determines the usual
(non-equivariant) Poincar\'e series $P_{\{v_{ig}\}}(\tt)$.
The (usual) Poincar\'e series
$P_{\{v_{ig}\}}(\tt)$ determines the minimal resolution graph of
the set of valuations $\{v_{ig}\}$: \cite{FAOM}.
(Formally speaking, in \cite{FAOM} it was assumed that the
divisorial valuations in the set are different. However one can easily see
that there is no difference if one permits repeated valuations.)
Moreover the action of the group $G$ on the set $\{v_{ig}\}$ of
valuations induces a $G$-action on the minimal resolution graph.
By Theorem 1 one has to show that the $G$-Poincar\'e series
$P^G_{\{v_{ig}\}}$ determines the representation of $G$ on $\C^2$
and the correspondence between ``tails" of the resolution graph
emerging from the special points on the first component of the
exceptional divisor and these points. (If there are no special
points on the first component of the exceptional divisor (this
can happen only if
$G$ is cyclic), only the representation of $G$ on $\C^2$ has to
be determined.)

Let us consider the case of an Abelian group $G$
first. If there are no special points on the first component
$E_{\bf 1}$ of the exceptional divisor, all points of $E_{\bf 1}$
are fixed with respect to the group $G$, the group $G$ is cyclic
and the representation is a scalar one. This (one dimensional)
representation is dual to the representation of the group $G$ on
the one-dimensional space generated by any linear function. The
case when there are no more components in ${\cal D}$, i.e. if the
resolution is achieved by the first blow-up, is trivial.
Otherwise let us consider a maximal component $E_{\sigma}$ among those components $E_{\tau}$ of the exceptional divisor for which $G_{\tau}=G$ and the corresponding curvette is smooth. (The last condition can be easily detected from the resolution graph.)
The smooth part $\stackrel{\bullet}{E}_{\sigma}$ of this component contains a special point with $G_x=G$ (or all the points of $\stackrel{\bullet}{E}_{\sigma}$ are such that $G_x=G$). The point(s) from $\stackrel{\bullet}{E}_{\sigma}$ with $G_x=G$ bring(s) into the equation~\ref{eq2} a factor of the form $(1-T_{\Xi})^{-1}$ where
$T_{\Xi}$ is represented by the $G$-set consisting of one point and which cannot be
eliminated by other factors. The ($G$-equivariant) curvette $L$ at the described
special point of the divisor is smooth. Therefore the representation of $G$ on the
one-dimensional space generated by a $G$-equivariant equation of $L$ coincides with
the representation on the space generated by a linear function.

Let us take all factors in the representation of the Poincar\'e series
$P^G_{\{v_i\}}$
of the form $\prod_{T}(1-T)^{s{_T}}$, where $T$ is represented by the $G$-set
consisting of one point and with $s_T=-1$. For each of them, the corresponding
$\ww$ of $T$ determines the corresponding component of the exceptional divisor and
therefore the topological type of the corresponding curvettes. Therefore one can
choose a factor which corresponds to a component with a smooth curvette and the
representation $\alpha^{T}$ gives us the representation on the space generated by a
linear function.

Let the first component $E_{\bf 1}$ contain two special points. Without loss of
generality we can assume that they correspond to the coordinate axis $\{x=0\}$ and
$\{y=0\}$. The representation of the group $G$ on $\C^2$ is defined by its action on
the linear functions $x$ and $y$. For each of them this action can be recovered from
a factor of the form described above just in the same way.
Moreover, a factor, which determines the action of the group $G$ on the function $x$,
corresponds to a component of the exceptional divisor from the tail emerging from the point $\{x=0\}$.

Now let $G$ be an arbitrary (not necessarily Abelian) group. For an element $g\in
G$ consider the action of the cyclic group $\langle g\rangle$ generated by $g$ on
$\C^2$. One can see that the $G$-equivariant Poincar\'e series $P^{G}_{\{v_i\}}$
determines the $\langle g\rangle$-Poincar\'e series
$P^{\langle g\rangle}_{\{v_i\}}$ just like in \cite[Proposition 2]{RMC}. This
implies that the $G$-equivariant Poincar\'e series determines the representation of the
subgroup $\langle g\rangle$. (Another way is to repeat the arguments above adjusting
them to the subgroup $\langle g\rangle$.) Therefore the $G$-Poincar\'e series
$P^{G}_{\{v_i\}}$ determines the value of the character
of the $G$-representation on $\C^2$ for each element $g\in G$ and thus the
representation itself.

Special points of the $G$-action on the first component $E_{\bf 1}$ of the
exceptional divisor correspond to some Abelian subgroups $H$ of $G$. For each such
subgroup $H$ there are two special points corresponding to different one-dimensional
representations of $H$. Again the construction above for an Abelian group permits to
identify tails of the dual resolution graph with these two points. This finishes the
proof.
\end{proof}

\section{$G$-Poincar\'e series and $G$-topology of
curves}\label{sec4}
A connection between the equivariant Poincar\'e series of a set of curve valuations and the equivariant topology of the corresponding curve singularity is more involved than for divisorial valuations.

\begin{example}
One can see that the collections $\{C_i\}$ and $\{C'_i\}$ from Example 1 or 2 in
Section~1 have the same $G$-Poincar\'e series. Namely
$P^{G}_{\{C_i\}}= P^{G}_{\{C'_i\}}= (1-T)$, where $T$ is the $(G,3)$-set defined by
$X=G/(e)$ ($G=\Z_{15}$ or $\Z_7$ in Examples 1 and 2 respectively), $w$ is the
constant function on $X$ with $w(x)=(2,1,2)$. (The representation
$\alpha(x)$ is trivial being a representation of the trivial group $(e)$).
Thus the $G$-Poincar\'e series does not, in general, determines the topology of the
set of curves.
\end{example}

\begin{remark}
One can see that the $G$-Poincar\'e series of the set of divisorial valuations from
Example 3 are different since the factors corresponding to the special points on the
second divisors are different. This factors do not appear in the $G$-Poincar\'e
series of curves in Example 1 since the strict transforms of branches pass through these
points.

We shall show that the effect like the one in the Example occurs only if, among the
branches of the curve, there are smooth branches invariant with respect to a
non-trivial element $g$ of the group $G$ whose action on $\C^2$ is not a scalar one
(i.e. the representation  of the cyclic subgroup $\langle g\rangle$ generated by $g$ is the sum
of two different one-dimensional representations).
\end{remark}

\begin{theorem}\label{curve_top}
Let $\C^2$ be equipped with a faithful action of a finite group $G$ and let
$\{C_i\}$, $i=1, \ldots, r$, be a collection of irreducible curve singularities in
$(\C^2, 0)$ such that it does not contain curves from the same $G$-orbit and it does not contain a smooth curve invariant with respect to a non-trivial element of $G$ whose action on $\C^2$ is not a scalar one.
Let $\{v_i\}$ be the corresponding collection of valuations. Then the $G$-equivariant Poincar\'e series $P^G_{\{v_i\}}$ of the collection
$\{v_i\}$ determines the minimal
$G$-resolution graph of the curve $\bigcup_{i=1}^r C_i$ and
the $G$-equivariant topology of the pair $(\C^2, \bigcup_{i=1}^r C_i)$.
\end{theorem}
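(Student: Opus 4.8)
The plan is to recover from $P^G_{\{v_i\}}$ precisely the data that Theorem 1 shows to determine the $G$-topology: the representation of $G$ on $\C^2$, the minimal $G$-resolution graph as a partially ordered $G$-set, and the correspondence between the tails of the graph and the special points on $E_{\bf 1}$. Following the scheme of the proof of Theorem~\ref{div_top}, I would first pass from $\{v_i\}$ to the collection $\{v_{ig}\}$ of all $G$-shifts of the branches. Since the collection contains no two curves from one orbit, these shifts are genuinely distinct and constitute the branches of the $G$-invariant curve $C=\bigcup_{i,g}gC_i$; and, exactly as in Theorem~\ref{div_top}, $P^G_{\{v_i\}}$ determines $P^G_{\{v_{ig}\}}$ (the same underlying $G$-set and representation data $\alpha$, with the weight function relabelled over $\{1,\dots,r\}\times G$ by $w_{ig}(x)=w_i((g^{-1})^*x)$).

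Next I would extract the graph. By \cite[Statement 2]{RMC} the series $P^G_{\{v_{ig}\}}$ determines the ordinary Poincar\'e series $P_{\{v_{ig}\}}(\tt)$, which by \cite{duke} is the multivariable Alexander polynomial of the link of $C$; by \cite{yamamoto} this fixes the embedded topological type of $C$, hence the combinatorics of its minimal embedded resolution (see \cite{Wall}). As $C$ is $G$-invariant and its minimal embedded resolution is canonical, that resolution is automatically $G$-equivariant, so the graph so obtained is the minimal $G$-resolution graph asserted in the theorem; the $G$-set structure carried by $P^G_{\{v_{ig}\}}$ records how $G$ permutes the vertices and arrows and thus recovers the graph as a $G$-set.

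The heart of the argument, and the only place where the hypotheses on the branches enter, is the recovery of the representation of $G$ and of the tail/special-point correspondence. As in the Abelian part of Theorem~\ref{div_top}, I would single out in the A'Campo-type product~(\ref{eq2}) for $P^G_{\{v_i\}}$ the factors $(1-T)^{-1}$ whose $T$ is a one-point $G$-set of nontrivial isotropy carrying a smooth curvette, and read the representation on the corresponding linear coordinate from the attached one-dimensional representation $\alpha$; for general $G$ I would first recover the representation through its character values by restricting to the cyclic subgroups $\langle g\rangle$ as in \cite[Proposition 2]{RMC}, and then use these one-point factors to attach the two special points of each relevant Abelian subgroup $H$ to their tails. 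The feature peculiar to curves is that in~(\ref{eq2}) the smooth part ${\stackrel{\bullet}{E}}_\sigma$ omits the points where strict transforms of the branches meet the exceptional divisor, so a special point occupied by a strict transform contributes no usable factor. The geometric fact I would establish is that a strict transform can occupy the chain of special points continuing an eigendirection all the way through the resolution -- permanently blocking it and, as in Example 1, wiping out the representation information with no surviving tail -- exactly when the branch is smooth and invariant under a non-scalar element of its isotropy; any other branch tangent to an eigendirection is either singular, and hence resolved through a genuine tail, or smooth with nontrivial $G$-shifts that are forced apart by further blow-ups, and in either case a free special point of the correct isotropy survives past the separating blow-up to carry a curvette. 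Under the standing hypothesis the blocking branches are excluded, so for both eigendirections (equivalently, for each Abelian subgroup $H$ and each of its two one-dimensional representations) a usable one-point factor remains; this yields the full representation together with the tail correspondence, and Theorem 1 then produces the $G$-topology of the pair $(\C^2,\bigcup_{i=1}^r C_i)$.

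The main obstacle is precisely this geometric lemma: deciding, in terms only of smoothness and invariance of the branches, whether strict transforms permanently occupy the special points along the tails, and hence whether the one-point factors needed to read the representation survive in~(\ref{eq2}). Verifying that ``smooth and invariant under a non-scalar element'' is the sole obstruction, and that excluding it suffices at \emph{every} relevant special point -- including those arising deep in the tails and, for non-Abelian $G$, those attached to the various Abelian subgroups $H$ -- is where the real work lies.
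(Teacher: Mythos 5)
Your overall strategy is the paper's: pass to the collection $\{v_{ig}\}$ of $G$-shifts, recover from $P^G_{\{v_{ig}\}}$ the ordinary Poincar\'e series and hence the resolution graph with its $G$-action, and then read off the representation of $G$ on $\C^2$ and the tail/special-point correspondence from the one-point factors $(1-T_{\Xi})^{-1}$ attached to components with smooth curvettes, using the hypothesis on the branches to guarantee that no strict transform occupies the special point that is supposed to carry such a factor. That last step is exactly how the paper argues (in a single sentence: under the hypothesis there are no strict transforms at points with $G_x=G$ and smooth curvette, so the needed factor survives in the A'Campo-type decomposition), so the ``geometric lemma'' you flag as the real work is treated no more fully in the paper than in your sketch.

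The concrete gap is your claim that, because no two $C_i$ lie in one $G$-orbit, the shifted valuations $v_{ig}$ are genuinely distinct. That is false: the hypothesis only forbids $g_1C_{i_1}=g_2C_{i_2}$ for $i_1\neq i_2$; a single branch $C_i$ may satisfy $gC_i=C_i$ for all $g$ in a nontrivial isotropy subgroup (a singular $G$-invariant branch, or a smooth branch invariant under scalar elements, are both permitted), and then $v_{ig_1}=v_{ig_2}$ for $g_1\neq g_2$. The paper points this out explicitly and is forced to prove a refined version of the graph-recovery theorem for collections of curve valuations \emph{with repeated entries} (its Statement~2), which recovers only the minimal resolution graph of the reduced curve and expressly does \emph{not} determine how the indices distribute into groups of equal valuations. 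Your route through the Alexander polynomial and Yamamoto's theorem silently assumes that $P_{\{v_{ig}\}}$ is the Alexander polynomial of $C$ in one variable per distinct branch; with repetitions that identification is not available until one already knows which $v_{ig}$ coincide, which is precisely what the series does not hand you. So the graph-recovery step needs to be replaced by an argument valid for repeated curve valuations before the rest of your proof (which is otherwise sound and parallel to the paper's) can go through.
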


\begin{proof}
Let $\{v_{ig}\}$ ($i=1,\ldots, r; g\in G$), where
$v_{ig}(\varphi) = v_i((g^{-1})^{*}\varphi)$ is the set of
``$G$-shifts" of the valuations $v_i$ (it is possible that
$v_{i_1g_1}=v_{i_2g_2}$ and even that $v_{ig_1}=v_{ig_2}$). The
$G$-Poincar\'e series
$P^G_{\{v_{i}\}}$ determines the
$G$-Poincar\'e series
$P^G_{\{v_{ig}\}}$ just in the same way as in the divisorial case in Theorem~\ref{div_top}.

Since, with every valuation from the collection $\{v_{ig}\}$,
this collection contains also its $G$-shifts, the remark after
Statement 2 from \cite{RMC} implies that the usual (non-equivariant) Poincar\'e series
$P_{\{v_{ig}\}}(\{t_{ig}\})$ is determined by the $G$-Poincar\'e series $P^G_{\{v_{i}\}}$.

The collection $\{v_{ig}\}$ contains repeated (curve) valuations.
Therefore we need a (somewhat more precise) version of Theorem 2
from \cite{RMC} for collections of curve valuations with
(possibly) repeated ones. Assume that $\{C_i\}$ and $\{C'_i\}$,
$i=1,\ldots, r$, are collections of branches in $(\C^2,0)$
possibly with $C_{i_1}=C_{i_2}$ for $i_1\neq i_2$ (and/or
$C'_{j_1}=C'_{j_2}$ for $j_1\neq j_2$) and let $\{v_i\}$ and
$\{v'_i\}$ be the collections of the corresponding valuations.
Essentially the same proof as in \cite[Theorem 2]{RMC} gives the
following proposition.

\begin{statement}
If $P_{\{v_i\}}(\seq t1r) = P_{\{v'_i\}}(\seq t1r)$ then there
exists a
combinatorial equivalence of the minimal resolution graphs of the
reductions of the curves $\bigcup_{i=1}^rC_i$
and $\bigcup_{i=1}^rC'_i$, i.e. components of the exceptional
divisors corresponding to equivalent vertices intersect the
same numbers of the strict transforms of different curves from the collections $\{C_i\}$
and $\{C'_i\}$.  (The distribution of these curves into the
groups of equal ones can be different).
\end{statement}

The  knowledge of the usual Poincar\'e series
of the collection $\{v_{ig}\}$ gives
the minimal resolution graph (the usual, not the $G$-equivariant
one) of the collection. Moreover the action of
the group $G$ on the collection $\{v_{ig}\}$
of valuations determines the $G$-action on the resolution graph.

Like in Theorem 2 one has to show that the $G$-Poincar\'e series
$P^{G}_{\{v_i\}}$ determines the representation of $G$ on $\C^2$
and
the correspondence between the ``tails" of the resolution graph.
This is made just in the same way as in Theorem 2 for divisorial
valuations since there are no strict transforms of the branches
$gC_i$ at points of the exceptional divisor with $G_x = G$ and
the corresponding curvette smooth and therefore the corresponding
factor
$(1- T _{\Xi})^{-1}$ (which permits to restore the action of $G$
on the corresponding linear function: a coordinate) is contained
in the A'Campo type decomposition of the Poincar\'e series.
\end{proof}

\begin{remark}
One can see that the $G$-Poincar\'e series of a collection of curve valuations determines whether or not
the collection of curves satisfies the conditions of Theorem~\ref{curve_top}.
\end{remark}

Adresses:

Universidad de Valladolid, Instituto de Investigaci\'on en Matem\'aticas (IMUVA), 47011 Valladolid, Spain.
\newline E-mail: campillo\symbol{'100}agt.uva.es, fdelgado\symbol{'100}agt.uva.es

Moscow State University, Faculty of Mathematics and Mechanics, Moscow, GSP-1, 119991, Russia.
\newline E-mail: sabir\symbol{'100}mccme.ru

\end{document}